\newcommand{\NDPF}{\textrm{PF}^{\uparrow}}
\newcommand{\PF}{\textrm{PF}}
\newcommand{\al}{\mathbf{\alpha}}
\newcommand{\be}{\mathbf{\beta}}
\newtheorem{theorem}{Theorem}[section]
\newtheorem{lemma}[theorem]{Lemma}
\newtheorem{example}[theorem]{Example}
\newtheorem{remark}[theorem]{Remark}
\newtheorem{axiom}[theorem]{Axiom}
\newtheorem{definition}[theorem]{Definition}
\author[Elder, Harris, Kretschmann, and Mart\'inez Mori]{
    Jennifer Elder\affiliationmark{1}\thanks{
    J.~Elder was partially supported through an AWM Mentoring Travel Grant.
    }
    \and Pamela E. Harris\affiliationmark{2}\thanks{
    P.~E.~Harris was supported through a Karen Uhlenbeck EDGE Fellowship.
    }
    \and Jan Kretschmann\affiliationmark{2}
    \\ \and J.~Carlos Mart\'inez Mori\affiliationmark{3}\thanks{
    J.~C. Mart\'inez Mori is supported by Schmidt Science Fellows, in partnership with the Rhodes Trust.
    Part of this research was performed while J.~C. Mart\'inez Mori was visiting the Mathematical Sciences Research Institute (MSRI), now becoming the Simons Laufer Mathematical Sciences Institute (SLMath), which is supported by NSF Grant No. DMS-1928930.
    }
}
\title[Cost-sharing in Parking Games]{Cost-sharing in Parking Games}
\affiliation{
  Department of Computer Science, Mathematics and Physics, Missouri Western State University, St. Joseph, MO, USA \\
  Department of Mathematical Sciences, University of Wisconsin-Milwaukee, Milwaukee, WI, USA \\
  H. Milton Stewart School of Industrial and Systems Engineering, Georgia Institute of Technology, Atlanta, GA, USA
}
\keywords{parking functions, cooperative games, Shapley value}
\begin{document}

\publicationdata{vol. 26:3}{2024}{5}{10.46298/dmtcs.13113}{2024-02-25; 2024-02-25; 2024-09-04}{2024-09-07}

\maketitle
\begin{abstract}
In this paper, we study the total displacement statistic of parking functions from the perspective of cooperative game theory.
We introduce \emph{parking games}, which are coalitional cost-sharing games in characteristic function form derived from the total displacement statistic.
We show that parking games are supermodular cost-sharing games, indicating that cooperation is difficult (i.e., their \emph{core} is empty).
Next, we study their \emph{Shapley value}, which formalizes a notion of ``fair'' cost-sharing and amounts to charging each car for its expected marginal displacement under a random arrival order.
Our main contribution is a polynomial-time algorithm to compute the Shapley value of parking games, in contrast with known hardness results on computing the Shapley value of arbitrary games.
The algorithm leverages the permutation-invariance of total displacement, combinatorial enumeration, and dynamic programming.
We conclude with open questions around an alternative solution concept for supermodular cost-sharing games and connections to other areas in combinatorics.
\end{abstract}

\section{Introduction}
\label{sec: introduction}

Consider a one-way street with $n \in \mathbb{N} \coloneqq \{1, 2, \ldots\}$ numbered parking spots.
A sequence of $n$ cars arrive one at a time, each with a preferred spot.
Upon the arrival of car $i \in [n] \coloneqq \{1, 2, \ldots, n\}$, it drives to its preferred spot $a_i \in [n]$.
If spot $a_i$ is unoccupied, it parks there.
Otherwise, it is \emph{displaced} down the one-way street until it finds the first unoccupied spot in which to park, if one exists.
If no such spot exists, the car is unable to park and the parking process fails.
Let $\al = (a_1, a_2, \ldots, a_n) \in [n]^n$ be the $n$-tuple encoding the parking preference of every car.
If all cars are able to park, then  $\al$ is a \emph{parking function} of length $n$.

Parking functions were independently introduced by~\cite{konheim1966occupancy} in their study of hashing functions and by~\cite{pyke1959supremum} in his study of Poisson processes.
Since then, parking functions have become classical objects in combinatorics, displaying rich mathematical structure of their own, as well as diverse connections to other research areas; including hashing~\cite{knuth1998linear}, hyperplane arrangements~\cite{stanley1996hyperplane}, noncrossing partitions~\cite{stanley1997parking}, spanning trees~\cite{kreweras1980famille}, Dyck paths~\cite{armstrong2016rational}, polyhedral combinatorics~\cite{amanbayeva2021convex,stanley2002polytope}, sandpile groups~\cite{cori2000sandpile}, the Bruhat order~\cite{elder2023boolean}, Brownian motion~\cite{diaconis2017probabilizing}, and sorting~\cite{harris2023lucky}, to name just a few.
Refer to \cite{martinezmori2024what,carlson2021parking} for expository introductions to parking functions and their many variants, and to \cite{yan2015parking} for a survey of results.

In this paper, we study the ``fair'' distribution of parking costs.
Specifically, we take the \emph{total displacement} collectively incurred by all cars as the basis for the cost of parking.
As a motivating example, consider the all-ones parking function $(1, 1, \ldots, 1) \in \PF_n$.
In this case, car $1$ is lucky and parks in spot $1$ without incurring any displacement.
However, car $2$ is not as lucky, and it is displaced one unit before parking in spot $2$.
Similarly, car $i$ is displaced $i - 1$ units before parking in spot $i$.
Therefore, the total displacement of $(1, 1, \ldots, 1)$ is 
\begin{equation*}
    \sum_{i=1}^n (i-1) = \frac{n(n-1)}{2}.
\end{equation*}
Certainly, one can recover the total displacement by charging each car for the displacement it incurs: that is, charging $i - 1$ units to each car $i$.
However, this seems rather unfair given that all cars have the same preference; it just so happens that certain cars arrive before others (and the arrival order might be beyond the cars' control).
Therefore, in this case, it seems only fair to charge $(n-1)/2$ units to each car $i$, which again recovers the total displacement.
Now, suppose some car changed its preference from spot $1$ to spot $2$.
This would ever so slightly alleviate the parking demand around spot $1$, and in fact the total displacement would decrease by one unit.
How should this car be ``fairly'' compensated for its more favorable preference?

\subsection{Summary of Results}
We answer this question through the lens of cooperative game theory (refer to \cite{peleg2007introduction} for a comprehensive treatment of this area, and to \cite{myerson1991game} for its broader context in game theory).
Our contributions are as follows.

We first note that the total displacement of a parking function is invariant under the action of the permutation group, even when there are more spots than cars (Theorem~\ref{theorem: invariant}).
In turn, this allows us to define \emph{parking games} as a class of (transferable utility) cooperative games in characteristic function form (Definition~\ref{definition: parking game}).

We then show that parking games are supermodular cost-sharing games (Lemma~\ref{lemma: supermodularity}).
As such, their \emph{core}~\cite{shapley19555markets,gillies1959solutions} is typically empty, indicating that cooperation is difficult.
Supermodularity arises whenever costs are exacerbated by ``congestion'' effects, such as in scheduling~\cite{goemans2002single,queyranne1993structure,schulz2010sharing,schulz2013approximating}.
In much the same way, parking games are supermodular because a car that arrives at an already busy street tends to be displaced significantly before it finds an unoccupied spot.

Next, we adopt the \emph{Shapley value}~\cite{shapley1953value} as a notion of ``fair'' cost-sharing.
In parking games, the Shapley value amounts to charging each car for its \emph{expected marginal displacement} assuming its arrival order is determined uniformly at random.
However, a simple computation of this quantity requires exponential time.
Therefore, our main contribution is a polynomial-time algorithm to compute the Shapley value of parking games (Theorem~\ref{theorem: main}).
Our algorithm leverages the permutation-invariance of total displacement, combinatorial enumeration, and dynamic programming.
We contrast this positive result for the special case of parking games with known hardness results on computing the Shapley value for arbitrary cooperative games~\cite{deng1994complexity,faigle1992shapley}.

Finally, we note that unlike the \emph{scheduling games} studied by \cite{schulz2010sharing}, which are similar to parking games in that they are supermodular and their Shapley value can be computed in polynomial time, the Shapley value of parking games is not a \emph{least core} allocation (i.e., a cost-share distribution that minimizes the worst-case dissatisfaction from cooperation) (Lemma~\ref{lemma: not lca}).
We conclude with open questions around the least core and \emph{least core value} of parking games.

\subsection{Organization}
The remainder of this paper is organized as follows.
In Section~\ref{sec: background}, we present some necessary background on parking functions and cooperative game theory.
In Section~\ref{sec: parking games}, we define parking games, establish supermodularity, and introduce our algorithm.
We conclude in Section~\ref{sec: conclusion} with some additional properties and open questions.

\section{Background}
\label{sec: background}

\subsection*{Notation}

We briefly outline some notational conventions.
All tuples considered have positive integer entries and are denoted in boldface, as in $\be = (b_1, b_2, \ldots, b_n)\in\mathbb{N}^n$.
Throughout, weakly increasing tuples are furthermore decorated with an apostrophe, as in $\be' = (b_1', b_2', \ldots, b_n')$. 
If $\be = (b_1, b_2, \ldots, b_n)$ is an $n$-tuple and $i \in [n]$, then $|\be| = n$ denotes its size and
\begin{equation*}
    \be_{\hat{i}} = (b_1, \ldots, b_{i-1},b_{i+1}, \ldots, b_n)
\end{equation*}
denotes the $(n-1)$-tuple obtained from $\be$ upon the removal of its $i$th entry.
For any $n \in \mathbb{N}$, let $\mathfrak{S}_n$ denote the symmetric group over $[n]$.
If $\pi \in \mathfrak{S}_n$, then $\pi^{-1}$ denotes its inverse.

\subsection{Parking Functions}
\label{sec: parking functions}

Let $\PF_n \subseteq [n]^n$ denote the set of parking functions of length $n$.
Similarly, let $\NDPF_n \subseteq \PF_n$ denote the set of weakly increasing parking functions of length $n$.
\cite{konheim1966occupancy} showed that $|\PF_n| = (n+1)^{n-1}$ (OEIS \href{https://oeis.org/A000272}{A000272}).
We refer the reader to \cite{riordan1969ballots} for an elegant proof credited to Pollak.
\cite{stanley1997parking} showed, through a connection to noncrossing partitions, that $|\NDPF_n| = C_n$, where $C_n = \frac{1}{n+1}\binom{2n}{n}$ is the $n$th Catalan number (OEIS \href{https://oeis.org/A000108}{A000108}).

Let $\al = (a_1, a_2, \ldots, a_n) \in [n]^n$ and $\al' = (a_1', a_2', \ldots, a_n')$ be its weakly increasing rearrangement.
It is well-known (for instance, refer to \cite{yan2015parking}) that $\al \in \PF_n$ if and only if $a_i' \leq i$ for all $i \in [n]$.
As a consequence, $\PF_n$ consists of the orbits of the elements of $\NDPF_n$ under the action of $\mathfrak{S}_n$ (which permutes the subscripts).
In particular, $\PF_n$ is closed under the action of permutations.
Later in this work, we use the following definition.
For any fixed $\al \in \PF_n$, let $r: [n] \rightarrow [n]$ be a bijective rank function that maps each index $i$ in $\al$ to its index $r(i)$ in $\al'$ with ties broken arbitrarily, so that $a_i = a_{r(i)}'$.
Given any subset $S \subseteq [n]$, we denote $r(S) = \{r(i) : i \in S\}$.

\cite{konheim1966occupancy} also considered parking functions where there are more parking spots than cars.
Let $n, m \in \mathbb{N}$ with $m \geq n$.
Suppose there are $m$ numbered parking spots on the one-way street and $n$ cars arrive one at a time, each with a preferred spot in $[m]$.
Let $\al = (a_1, a_2, \ldots, a_n) \in [m]^n$ be the $n$-tuple encoding the parking preferences of every car.
Each car $i$ follows the same parking rule as before: it parks in the first unoccupied spot at or past its preferred spot $a_i \in [m]$, if one exists.
Let $\PF_{n, m} \subseteq [m]^n$ denote the set of preference tuples with $n$ cars and $m$ spots under which all cars are able to park; we refer to these as $(n,m)$-parking functions.
\cite[Lemma~2]{konheim1966occupancy} showed that $|\PF_{n, m}| = (m+1)^{n-1}(m+1-n)$.
Similarly, let $\NDPF_{n,m} \subseteq \PF_{n,m}$ denote the set of weakly increasing $(n,m)$-parking functions.

\subsection{Shapley Value and Core}
\label{sec: shapley value and core}

Let $n \in \mathbb{N}$.
A (transferable utility) \emph{coalitional cost-sharing game} over a set of players $[n]$ is specified by a \emph{characteristic function} $c: 2^{[n]} \rightarrow \mathbb{R}$ satisfying $c(\emptyset) = 0$.
Here, $c(S)$ is the collective cost incurred by the members of the coalition $S \subseteq [n]$ should they act in unison.
The set $[n]$ is referred to as the \emph{grand coalition}.
In simpler terms, the assumption of transferable utility is the existence of a tradable commodity (e.g., money), assumed to be subject to identical valuation from each player, so that the single number $c(S)$ suffices to capture the feasible cost-share allocations for the members of a coalition $S \subseteq [n]$.
A \emph{solution concept} is a function $\phi$ that associates with each game $c$ a subset of player cost-share allocations.
This theory assumes coalitions can reach binding agreements.
Even in scenarios where cooperation may be undesirable for individual agents, as \cite{schulz2010sharing} put it, it might be in the interest of an external party such as a government authority to encourage/enforce cooperation (e.g., through contracts, monetary penalties) to alleviate the negative of externalities of failure to cooperate.

\cite{shapley1953value} (a 2012 Nobel laureate for contributions to game theory~\cite{roth2016lloyd}) considered solution concepts in cost-sharing games through an axiomatic approach.
In particular, he posited the following as desirable properties of a solution concept.
\begin{axiom}[Efficiency]
\label{axiom: efficiency}
$\sum_{i=1}^n \phi_i(c) = c([n])$.
\end{axiom}
\begin{axiom}[Nullity]
\label{axiom: nullity}
If $c(S + \{i\}) = c(S)$ for every $S \subseteq [n] \setminus \{i\}$, then $\phi_i(c) = 0$.   
\end{axiom}
\begin{axiom}[Symmetry]
\label{axiom: symmetry}
If $c(S \cup \{i\}) = c(S + \{j\})$ for every $S \subseteq [n] \setminus \{i, j\}$, then $\phi_i(c) = \phi_j(c)$.
\end{axiom}
\begin{axiom}[Additivity]
\label{axiom: additivity}
If $c'$ is another characteristic function over the set of players $[n]$, then
\begin{equation*}
  \phi_i\left(c + c' \right) = \phi_i(c) + \phi_i(c')  
\end{equation*}
for each $i \in [n]$.
\end{axiom}
Axiom~\ref{axiom: efficiency} states that the sum of cost-shares recover the total cost incurred by the grand coalition.
Axiom~\ref{axiom: nullity} states that if player $i$ never increases the cost of cooperation, in the sense that the marginal cost $c(S + \{i\}) - c(S)$ is zero for all coalitions $S \subseteq [n] \setminus \{i\}$ that player $i$ could join, then player $i$ is correspondingly charged nothing.
Axiom~\ref{axiom: symmetry} states that if players $i$ and $j$ are equivalent with respect to $c$, in the sense that $c(S + \{i\}) = c(S + \{j\})$ for all coalitions $S \subseteq [n] \setminus \{i, j\}$ that players $i$ and $j$ could join (so that all of such coalitions are ambivalent between which of the two players joins them), then players $i$ and $j$ are correspondingly charged the same.
Lastly, Axiom~\ref{axiom: additivity} captures the idea that, if the players participate in a game that combines two independent (possibly completely different) games, then the outcome of one the games does not affect the outcome of the other.

Shapley showed that there is a unique solution concept satisfying Axioms~\ref{axiom: efficiency}-\ref{axiom: additivity}.
This solution concept is now known as the \emph{Shapley value}.
\begin{theorem}[\cite{shapley1953value}]
\label{theorem: shapley}
Let $\mathcal{C}$ be the set of characteristic functions over the set of players $[n]$.
The function $\phi: \mathcal{C} \rightarrow \mathbb{R}^n$ given by
\begin{equation}
\label{eq: shapley}
    \phi_i(c) = \frac{1}{n!} \sum_{\pi \in \mathfrak{S}_n} c \left(\{j \in [n]: \pi(j) \leq \pi(i)\}\right) - c \left(\{j \in [n]: \pi(j) < \pi(i)\}\right)
\end{equation}
for each $i \in [n]$ is the unique function satisfying Axioms~\ref{axiom: efficiency}-\ref{axiom: additivity}.
\end{theorem}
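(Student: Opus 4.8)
The plan is to prove the two halves of the claim separately: first \textbf{existence}, that the explicit function $\phi$ of \eqref{eq: shapley} satisfies Axioms~\ref{axiom: efficiency}--\ref{axiom: additivity}, and then \textbf{uniqueness}, that no other function can. The engine of the uniqueness argument is the observation that the characteristic functions over $[n]$ form a real vector space of dimension $2^n-1$ (with coordinates indexed by the nonempty subsets of $[n]$), and that this space has a convenient basis of \emph{unanimity games}; the axioms will be shown to pin down $\phi$ on each basis element, and Additivity will then propagate this to every game.

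For existence, Additivity is immediate because the right-hand side of \eqref{eq: shapley} is linear in $c$. For Efficiency, fix $\pi \in \mathfrak{S}_n$ and sum the bracketed marginal over $i \in [n]$; listing the players in the order $\pi^{-1}(1), \pi^{-1}(2), \ldots, \pi^{-1}(n)$ turns this into a telescoping sum equal to $c([n]) - c(\emptyset) = c([n])$, and averaging over $\pi$ preserves the value. For Nullity, if $i$ satisfies $c(S + \{i\}) = c(S)$ for all $S \subseteq [n]\setminus\{i\}$, then every bracketed term in \eqref{eq: shapley} has the form $c(S + \{i\}) - c(S)$ with $i \notin S$, hence is zero. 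For Symmetry, I would first rewrite \eqref{eq: shapley} in its coalitional form
\begin{equation*}
    \phi_i(c) = \sum_{S \subseteq [n]\setminus\{i\}} \frac{|S|!\,(n-1-|S|)!}{n!}\left(c(S + \{i\}) - c(S)\right),
\end{equation*}
which follows by counting the $|S|!\,(n-1-|S|)!$ permutations under which $S$ is exactly the set of predecessors of $i$. Given symmetric players $i$ and $j$, splitting this sum according to whether $j \in S$ and matching terms with the corresponding split for $\phi_j(c)$ (using $c(S + \{i\}) = c(S + \{j\})$ for $S \subseteq [n]\setminus\{i,j\}$) yields $\phi_i(c) = \phi_j(c)$.

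For uniqueness, for each nonempty $T \subseteq [n]$ let $u_T$ be the game with $u_T(S) = 1$ if $T \subseteq S$ and $u_T(S) = 0$ otherwise. By Möbius inversion on the subset lattice, every game $c$ admits a unique expansion $c = \sum_{\emptyset \neq T \subseteq [n]} \lambda_T\, u_T$ with real coefficients $\lambda_T$, so the $u_T$ form a basis. Let $\phi$ be any function satisfying the axioms, fix a nonempty $T$ and a scalar $\lambda$, and consider $\lambda u_T$. If $i \notin T$ then $i$ is a null player in $\lambda u_T$, so $\phi_i(\lambda u_T) = 0$ by Nullity; if $i, j \in T$ then $i$ and $j$ are symmetric in $\lambda u_T$, so $\phi_i(\lambda u_T) = \phi_j(\lambda u_T)$ by Symmetry; and since $(\lambda u_T)([n]) = \lambda$, Efficiency forces $\phi_i(\lambda u_T) = \lambda / |T|$ for $i \in T$. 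Thus $\phi$ is completely determined on every scalar multiple of a basis element, and applying Additivity finitely many times to $c = \sum_T \lambda_T u_T$ gives $\phi_i(c) = \sum_T \phi_i(\lambda_T u_T)$. Hence $\phi$ is unique; since the explicit function of \eqref{eq: shapley} satisfies the axioms, it is this unique function.

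The main obstacle is organizational rather than deep: one must verify the symmetry axiom for \eqref{eq: shapley} with care (the term-by-term matching after the $j \in S$ / $j \notin S$ split is the fiddliest computation), and in the uniqueness step one must be alert to the fact that Additivity, as stated, is only a statement about sums of two games. This is nonetheless enough, precisely because the other three axioms determine $\phi$ outright on each single game $\lambda u_T$, so no separate homogeneity axiom is needed; Additivity is invoked solely to reassemble the finite decomposition $c = \sum_T \lambda_T u_T$, in which some $\lambda_T$ may be negative but each summand $\lambda_T u_T$ is still a legitimate characteristic function.
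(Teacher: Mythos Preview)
Your proof is correct and is the standard argument (existence by direct verification, uniqueness via the unanimity-game basis and the axioms). However, there is nothing in the paper to compare it to: the paper does not prove Theorem~\ref{theorem: shapley} at all. It attributes the result to \cite{shapley1953value} and directs the reader to \cite[Chapter~9.4]{myerson1991game} for a proof; the theorem is included as background. Your argument is essentially the textbook proof one would find in that reference, so in that indirect sense it matches what the paper relies on.
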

In other words, in the Shapley value, each player is charged their \emph{expected marginal cost} assuming the order in which they join the grand coalition is determined uniformly at random.
Refer to \cite[Chapter~9.4]{myerson1991game} for an approachable presentation of Axioms~\ref{axiom: efficiency}-\ref{axiom: additivity} and of Theorem~\ref{theorem: shapley}.

The Shapley value can be readily computed with exponentially-many oracle calls to the characteristic function (i.e., querying the value $c(S)$ for a given $S \subseteq [n]$), so it is natural to ask whether this many evaluation is generally necessary.
\cite[Theorem~3]{faigle1992shapley} showed that any algorithm that computes the Shapley value for \emph{arbitrary} characteristic functions requires exponentially-many calls.
Moreover, \cite[Theorem~9]{deng1994complexity} showed that, in general, computing the Shapley value is $\#P$-complete.
However, there are nontrivial special cases for which polynomial-time algorithms are known (e.g.,~\cite[Theorem~1]{deng1994complexity}).
In this work, we show that parking games are another such special case.

\cite{gillies1959solutions,shapley19555markets} introduced another solution concept, known as the \emph{core}, with desirable ``stability'' properties.
The core of a cooperative game $c$ is the set
\begin{equation*}
    \left\{\phi \in \mathbb{R}^n : \sum_{i=1}^n \phi_i = c([n]), \  \sum_{i \in S} \phi_i \leq c(S) \text{ for all } S \subseteq [n] \right\}.
\end{equation*}
Intuitively, it is the set of cost-share allocations that are simultaneously efficient (Axiom~\ref{axiom: efficiency}) and robust against coalitional defections.
In other words, no coalition is charged more than the cost that they would incur by themselves.

The Bondareva-Shapley theorem~\cite{bondareva1963some, shapley1967balanced} (refer also to~\cite[Chapter~3.1]{peleg2007introduction}) provides necessary and sufficient conditions for the non-emptiness of the core.
In particular, it implies that if $c$ is \emph{submodular}, then the core is non-empty.
Note that $c$ is submodular if 
\begin{equation*}
    c(S \cup \{i\}) - c(S) \geq c(T \cup \{i\}) - c(T) 
\end{equation*}
for all $i \in [n]$ and all $S \subseteq T \subseteq [n] \setminus \{i\}$.
Conversely, it is \emph{supermodular} if
\begin{equation*}
    c(S \cup \{i\}) - c(S) \leq c(T \cup \{i\}) - c(T) 
\end{equation*}
for all $i \in [n]$ and all $S \subseteq T \subseteq [n] \setminus \{i\}$.
It is \emph{modular} if it is simultaneously submodular and supermodular.
Intuitively, submodularity and supermodularity capture the notions of decreasing and increasing marginal costs, respectively.
It can be verified that the core of a supermodular game is empty unless it is modular, indicating that cooperation is difficult.

\section{Parking Games}
\label{sec: parking games}

\subsection{Displacement, Total Displacement, and Arrival Order}
\label{sec: displacement, total displacement, and arrival order}

Fix any $\al = (a_1, a_2, \ldots, a_n) \in \PF_n$.
Then, by definition, car $i$ with preference $a_i \in [n]$ parks in some spot $p_i \in [n]$ with $p_i \geq a_i$.
Therefore, the displacement incurred by car $i$ upon its arrival under $\al$ is given by
\begin{equation}
\label{eq: individual displacement}
    p_i - a_i \geq 0.
\end{equation}
Now, let $d: \PF_n \rightarrow \mathbb{N}$ be the \emph{total displacement} function, where $d(\al)$ is the total displacement incurred by all cars upon their arrival with preferences $\al \in \PF_n$. 
Using Equation~\eqref{eq: individual displacement}, for any fixed $\al \in \PF_n$ we have
\begin{equation}
\label{eq: disp}
    d(\al)  \coloneqq \sum_{i=1}^n p_i - a_i.
\end{equation}
For example, given a weakly increasing parking function $\al' = (a_1', a_2', \ldots, a_n') \in \NDPF_n$, the cars $1, 2, \ldots, n$ park in the order $1, 2, \ldots, n$.
Therefore, in this case we have $d(\al') = \sum_{i=1}^n (i-a_i')$.

We show that the total displacement statistic is invariant under rearrangement of the entries of a parking function.
This result is standard (e.g., it is stated in \cite[Chapter 13.2.2]{yan2015parking}), but we formally state and prove it for completeness and later use.
\begin{lemma}
\label{lemma: d(x) = d(pi(x))}
If $\al = (a_1, a_2, \ldots, a_n) \in \PF_n$ and $\pi \in \mathfrak{S}_n$ acts on $\al$ by permuting its subscripts, i.e. $\pi(\al)=(a_{\pi^{-1}(1)}, a_{\pi^{-1}(2)}, \ldots, a_{\pi^{-1}(n)})$, then $d(\al)=d(\pi(\al))$.
\end{lemma}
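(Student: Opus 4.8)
The plan is to rewrite $d(\al)$ in a form that makes the permutation-invariance transparent. By Equation~\eqref{eq: disp}, $d(\al) = \sum_{i=1}^n p_i - \sum_{i=1}^n a_i$, where $p_i$ denotes the spot in which car $i$ parks under $\al$. The term $\sum_{i=1}^n a_i$ is manifestly unchanged when we replace $\al$ by $\pi(\al) = (a_{\pi^{-1}(1)}, a_{\pi^{-1}(2)}, \ldots, a_{\pi^{-1}(n)})$, since $\pi^{-1}$ merely reindexes the summation. So it suffices to show that $\sum_{i=1}^n p_i$ is the same whether the cars arrive with preferences $\al$ or with preferences $\pi(\al)$.

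The key observation is that, for any parking function of length $n$, the set of occupied spots is all of $[n]$: there are $n$ cars, distinct cars never park in the same spot, and there are only $n$ spots available, so every spot is filled. Since $\pi(\al) \in \PF_n$ as well (recall that $\PF_n$ is closed under the action of permutations), the same conclusion holds for $\pi(\al)$. Hence in both cases the collection of parking spots $\{p_1, p_2, \ldots, p_n\}$ is exactly $\{1, 2, \ldots, n\}$, and therefore
\begin{equation*}
    \sum_{i=1}^n p_i = \sum_{k=1}^n k = \frac{n(n+1)}{2}
\end{equation*}
regardless of the arrival order. Combining this with the previous paragraph yields $d(\al) = \frac{n(n+1)}{2} - \sum_{i=1}^n a_i = d(\pi(\al))$, as desired.

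I do not anticipate a serious obstacle here; the only point requiring care is the claim that every spot is occupied, which is precisely where the defining property of a length-$n$ parking function — that all $n$ cars successfully park — is invoked, together with the (immediate) fact that two cars never share a spot. An alternative, more hands-on approach would track how the set of occupied spots evolves car by car and argue by induction that, after processing any prefix of arrivals, this set depends only on the multiset of preferences seen so far; that argument also works and has the advantage of generalizing to the setting with more spots than cars needed for Theorem~\ref{theorem: invariant}, but the counting argument above is the most economical route to the statement as given.
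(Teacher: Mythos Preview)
Your proof is correct and follows essentially the same approach as the paper: both arguments observe that $\sum_i p_i = n(n+1)/2$ because the set of occupied spots is all of $[n]$, and that $\sum_i a_i$ is unchanged under reindexing, yielding $d(\al) = n(n+1)/2 - \sum_i a_i = d(\pi(\al))$. Your closing remark about the inductive alternative also correctly anticipates the extra work needed for Theorem~\ref{theorem: invariant}.
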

\begin{proof}
Note that under $\al$, car $i$ with preference $a_i \in [n]$ parks in some spot $p_i \in [n]$. 
Since $\al \in \PF_n$, this implies $\{p_1, p_2, \ldots, p_n\} = [n]$ and we have
\begin{equation*}
   d(\al) 
   = \sum_{i=1}^{n} \left(p_i - a_i\right)
   = \sum_{i=1}^{n} i - \sum_{i=1}^{n} a_i
   = \frac{n(n+1)}{2} - \sum_{i=1}^n a_i. 
\end{equation*}
Similarly, note that under $\pi(\al)$, car $i$ with preference $a_{\pi^{-1}(i)}$ parks in some spot $q_i \in [n]$.
Since $\pi(\al) \in \PF_n$, this implies $\{q_1, q_2, \ldots, q_n\} = [n]$ and we have
\begin{equation*}
    d(\pi(\al))
   = \sum_{i=1}^{n} \left(q_i - a_{\pi^{-1}(i)}\right)
   = \sum_{i=1}^{n} i - \sum_{i=1}^{n} a_{\pi^{-1}(i)}
   = \frac{n(n+1)}{2} - \sum_{i=1}^{n} a_{\pi^{-1}(i)}.
\end{equation*}
Lastly, note that 
\begin{equation*}
    \sum_{i=1}^n a_i = \sum_{i=1}^{n} a_{\pi^{-1}(i)}
\end{equation*}
since $\pi$ is a bijection from $[n]$ to $[n]$.
\end{proof}

We now generalize this result to the case of $(n,m)$-parking functions.
For $n, m \in \mathbb{N}$ with $m \geq n$, let $d: \PF_{n,m} \rightarrow \mathbb{N}$ be the total displacement function, where $d(\al)$ is the total displacement incurred by all cars upon their arrival with preferences $\al \in \PF_{n,m}$.

\begin{theorem}
\label{theorem: invariant}
If $\al = (a_1, a_2, \ldots, a_n) \in \PF_{n,m}$ and $\pi\in\mathfrak{S}_n$ acts on $\al$ by permuting its subscripts, i.e. $\pi(\al) = (a_{\pi^{-1}(1)}, a_{\pi^{-1}(2)}, \ldots, a_{\pi^{-1}(n)})$, then $d(\al)=d(\pi(\al))$.
\end{theorem}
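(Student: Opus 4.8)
The plan is to reduce the statement to one fact: the \emph{set} of spots occupied at the end of the parking process depends only on the multiset of preferences, not on the order in which the cars arrive. Granting this, write $P \subseteq [m]$ for the common set of occupied spots produced by $\al$ and by $\pi(\al)$ (they have the same preference multiset). If car $i$ parks in spot $p_i$ under $\al$ and in spot $q_i$ under $\pi(\al)$, then $\{p_1,\dots,p_n\} = \{q_1,\dots,q_n\} = P$, so by Equation~\eqref{eq: disp}
\begin{equation*}
d(\al) = \sum_{i=1}^n p_i - \sum_{i=1}^n a_i = \sum_{s \in P} s - \sum_{i=1}^n a_i, \qquad d(\pi(\al)) = \sum_{s \in P} s - \sum_{i=1}^n a_{\pi^{-1}(i)},
\end{equation*}
and these coincide because $\pi$ permutes $[n]$, exactly as in the proof of Lemma~\ref{lemma: d(x) = d(pi(x))}. (This also recovers that Lemma as the special case $P=[n]$.)

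To prove that the occupied set is order-independent, I would use that $\mathfrak{S}_n$ is generated by adjacent transpositions, so it suffices to show the occupied set is unchanged when the arrival order of two consecutive cars is swapped. The cars arriving before those two park identically in both scenarios and leave the street in a common state $Q \subseteq [m]$, which reduces everything to a local two-car claim: if cars with preferences $a$ and $b$ arrive at a street with occupied set $Q$, the resulting occupied set is the same whether $a$ or $b$ arrives first (and one order leaves both cars parked if and only if the other does). I would prove this by introducing $f(R,x) = \min\{s \in [m]\setminus R : s \ge x\}$, the spot taken by a car preferring $x$ when the occupied set is $R$; assuming $a \le b$ without loss of generality, one then compares $Q \cup \{f(Q,a),\, f(Q \cup \{f(Q,a)\}, b)\}$ with $Q \cup \{f(Q,b),\, f(Q \cup \{f(Q,b)\}, a)\}$, splitting on whether $f(Q,b) = f(Q,a)$ or $f(Q,b) > f(Q,a)$. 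In each case one checks that the two-element set appended to $Q$ is the same.

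The main obstacle is this case analysis: it is elementary but requires care in arguing that inserting the first car's spot into $Q$ does not disturb the search carried out by the second car — in the case $f(Q,b) > f(Q,a)$ one shows $f(Q,a) < b$, so it lies below the range searched from $b$; in the case $f(Q,b) = f(Q,a)$ one shows no free spot lies strictly between $a$ and $b$ — together with handling the boundary situations where a car fails to park. That the process succeeds in every arrival order comes for free here, since $\al \in \PF_{n,m}$ and (as the swap argument itself shows, analogously to the statement for $\PF_n$) $\PF_{n,m}$ is closed under the $\mathfrak{S}_n$-action, so $d(\pi(\al))$ is well defined. Chaining adjacent transpositions then yields permutation-invariance of the occupied set, and the displacement identity above finishes the proof. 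An alternative would be induction on $m$, splitting on whether spot $m$ is occupied (if not, discard it and apply the inductive hypothesis with $m-1$ spots; if so, exactly one car prefers $m$, parks there with zero displacement, and can be removed to pass to $\PF_{n-1,m-1}$), with Lemma~\ref{lemma: d(x) = d(pi(x))} as the base case $m=n$.
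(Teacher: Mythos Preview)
Your main argument (adjacent transpositions plus a two-car swap lemma) is correct and takes a genuinely different route from the paper. The paper argues globally: assuming the occupied sets $S$ (under $\al$) and $T$ (under $\pi(\al)$) differ, it takes the least $p$ in their symmetric difference, say $p \in S \setminus T$, and counts cars with preference at most $p$. Since $p \notin T$, spot $p$ is empty throughout the process for $\pi(\al)$, so every car with preference $\le p$ parks in $[1,p-1]$; hence there are exactly $|T \cap [1,p-1]|$ such cars. Under $\al$, the $|S \cap [1,p]| = |T \cap [1,p-1]| + 1$ cars parked in $[1,p]$ all have preference $\le p$, contradicting that $\al$ and $\pi(\al)$ have the same preference multiset. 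This is shorter than your approach and avoids both the generator reduction and the case split on $f(Q,\cdot)$; on the other hand, your swap argument is more explicit and, as you note, yields closure of $\PF_{n,m}$ under $\mathfrak{S}_n$ as a byproduct rather than as an assumption.

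One caution on the alternative you sketch at the end: the induction on $m$ as stated is flawed. If spot $m$ is occupied, it need not be by a car that \emph{prefers} $m$; for instance $\al = (1,1) \in \PF_{2,2}$ has spot $2$ occupied but no car preferring it, so you cannot peel off a zero-displacement car. The correct dichotomy is on whether some car prefers $m$ (in which case exactly one does, and it parks at $m$ with zero displacement in every order), not on whether spot $m$ ends up occupied; but then the ``no car prefers $m$'' branch does not reduce to $\PF_{n,m-1}$ either, so the induction needs more work than you indicate.
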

\begin{proof}
Upon replicating the proof technique from Lemma~\ref{lemma: d(x) = d(pi(x))}, it suffices to show that the subset of occupied spots under $\al$ is the same as the subset of occupied spots under $\pi(\al)$ (this is immediate in Lemma~\ref{lemma: d(x) = d(pi(x))} since it assumes $\al, \pi(\al) \in \PF_n$).

Suppose that under $\al$, the cars park in spots in $S \subseteq [m]$ whereas under $\pi(\al)$, the cars park in spots in $T \subseteq [m]$.
Note that $|S| = |T| = n$.
If $S = T$, then the result follows.
Suppose by way of contradiction that $S\neq T$, and let $p$ be the smallest value in $(S \setminus T) \cup (T \setminus S)$.
Without loss of generality, suppose $p \in S \setminus T$ (otherwise we swap the roles in the following argument).
Since $p \in S$, there are $k$ cars with preference less than or equal to $p$ in $\alpha$ for some $1 \leq k \leq n$.
Conversely, since $p \notin T$ and $p$ is the smallest value in $S \setminus T$, there are $k-1$ cars with preference less than or equal to $p$ in $\pi(\alpha)$.
This is a contradiction, for $\al$ and $\pi(\al)$ are equal as multisets.
\end{proof}

Given Theorem~\ref{theorem: invariant}, we now define parking games as coalitional games in characteristic function form, where each car is treated as a player.
\begin{definition}[Parking Game]
\label{definition: parking game}
Let $\al = (a_1, a_2, \ldots, a_n) \in \PF_n$ be a parking function.
The parking game of $\al$ is given by the characteristic function $c_\al: 2^{[n]} \rightarrow \mathbb{N}$, where
\begin{equation}
    c_\al(S) = d((a_i: i \in S))
\end{equation}
for each $S \subseteq [n]$.
\end{definition}
In other words, $c_\al(S)$ is the total displacement of the $(|S|,n)$-parking function $(a_i : i \in S) \in \PF_{|S|, n}$.
Note that this definition relies on Theorem~\ref{theorem: invariant} to treat $c_\al$ as a set function that is independent of the arrival order of the cars in $S$.
As noted in the following remark, the parking game of a parking function can be obtained through the parking game of its weakly increasing rearrangement.
\begin{remark} 
Let $\al = (a_1, a_2, \ldots, a_n) \in \PF_n$ be a parking function and $\al' = (a_1', a_2', \ldots, a_n') \in \NDPF_n$ be its weakly increasing rearrangement.
Then, by Theorem~\ref{theorem: invariant}, it follows that
\begin{equation*}
    c_\al(S) = d((a_i: i \in S)) = d((a_i': i \in r(S))) = c_{\al'}(r(S))
\end{equation*}
for each $S \subseteq [n]$.
\end{remark}

\subsection{Supermodularity}
\label{sec: supermodularity}

We now show that parking games are supermodular cost-sharing games.
\begin{lemma}
\label{lemma: supermodularity}
Let $\al = (a_1, a_2, \ldots, a_n) \in \PF_n$ be a parking function.
Then, $c_\al$ is supermodular, i.e.,
\begin{equation*}
    c_\al(S \cup \{i\}) - c_\al(S) \leq c_\al(T \cup \{i\}) - c_\al(T) 
\end{equation*}
for all $i \in [n]$ and $S \subseteq T \subseteq [n] \setminus \{i\}$.
\end{lemma}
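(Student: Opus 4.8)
The plan is to reduce supermodularity to a statement about adding a single car to a configuration and tracking how its marginal displacement grows as the underlying set of cars grows. Since it suffices to check the defining inequality, I will fix $i \in [n]$ and $S \subseteq T \subseteq [n] \setminus \{i\}$ and show that the marginal cost of car $i$ joining coalition $S$ is at most the marginal cost of car $i$ joining the larger coalition $T$. Using the permutation-invariance from Theorem~\ref{theorem: invariant}, I am free to assume the cars in each coalition arrive in weakly increasing order of preference, so that $c_\al(S)$ and $c_\al(S \cup \{i\})$ can both be computed by the ``park the sorted list'' procedure. The marginal displacement $c_\al(S \cup \{i\}) - c_\al(S)$ then equals the displacement incurred by car $i$ when it is inserted into the already-parked configuration of the cars in $S$ (parked sorted), plus any additional displacement forced on cars of $S$ that preferred spots at or beyond $i$'s landing spot — but it is cleaner to insert $i$ \emph{last} and use the closed-form $d(\be') = \sum (j - b_j')$ from the discussion preceding Lemma~\ref{lemma: d(x) = d(pi(x))}, which localizes the change entirely to the one extra term and the reindexing it induces.

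\textbf{Key steps.} First, by Theorem~\ref{theorem: invariant}, replace each coalition $U \in \{S, T, S \cup \{i\}, T \cup \{i\}\}$ by the weakly increasing rearrangement of $(a_j : j \in U)$, and let $\mathrm{Occ}(U) \subseteq [n]$ denote the set of spots occupied when those $|U|$ cars park. Using the identity $d((a_j : j \in U)) = \sum_{p \in \mathrm{Occ}(U)} p - \sum_{j \in U} a_j$ (the $(n,m)$ analogue of the computation in Lemma~\ref{lemma: d(x) = d(pi(x))}), the marginal cost simplifies to
\begin{equation*}
    c_\al(U \cup \{i\}) - c_\al(U) = \left(\sum_{p \in \mathrm{Occ}(U \cup \{i\})} p - \sum_{p \in \mathrm{Occ}(U)} p \right) - a_i.
\end{equation*}
So the inequality to prove becomes purely about occupied-spot sets: the ``new spot'' created by inserting car $i$ into $T$ is at least as far down the street as the new spot created by inserting car $i$ into $S$. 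Second, I would establish the structural fact that $\mathrm{Occ}(S) \subseteq \mathrm{Occ}(T)$ is \emph{false} in general, so instead I track the key monotonicity: for every threshold $p$, the number of cars in $T$ with preference $\le p$ is at least the number in $S$ with preference $\le p$, hence the first free spot $\ge a_i$ in the $T$-configuration is weakly to the right of the first free spot $\ge a_i$ in the $S$-configuration. This is the combinatorial heart: cars in $T \setminus S$ can only ``push'' car $i$ further along, never pull it back. Third, conclude by noting $\sum_{p \in \mathrm{Occ}(U \cup \{i\})} p - \sum_{p \in \mathrm{Occ}(U)} p$ equals the landing spot of car $i$ in the $U$-configuration (inserting $i$ last does not disturb the already-parked cars, it only adds one new occupied spot), so the displayed marginal cost is exactly $(\text{landing spot of } i \text{ in } U) - a_i$, which is monotone nondecreasing in $U$ by the previous step.

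\textbf{Main obstacle.} The delicate point is justifying that the landing spot of car $i$ is monotone in the coalition — i.e., that enlarging $S$ to $T$ weakly increases the first unoccupied spot at or beyond $a_i$. The subtlety is that $\mathrm{Occ}(T)$ is not simply $\mathrm{Occ}(S)$ with extra spots added; parking the sorted list for $T$ can occupy a genuinely different set of spots than parking the sorted list for $S$. I would handle this by the ``prefix-count'' invariant: a spot $p$ is occupied in configuration $U$ if and only if $\#\{j \in U : a_j \le p\} \ge p - q + 1$ where $q$ is the largest unoccupied spot $\le p$ (equivalently, occupied spots are determined by a discrete ballot-type condition on preference counts). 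Since $\#\{j \in T : a_j \le p\} \ge \#\{j \in S : a_j \le p\}$ for all $p$, every spot occupied in the $S$-configuration at or below car $i$'s $S$-landing spot is also occupied in the $T$-configuration, forcing $i$'s $T$-landing spot to be weakly larger. Making this prefix-count characterization precise and verifying it is the one routine-but-careful computation the proof needs; everything else is bookkeeping with the closed-form displacement formula and Theorem~\ref{theorem: invariant}.
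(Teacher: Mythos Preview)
Your approach and the paper's are essentially the same: both observe (via Theorem~\ref{theorem: invariant}) that one may insert car $i$ last, so that $c_\al(U\cup\{i\})-c_\al(U)$ equals the displacement of car $i$ when it parks into the already-formed configuration $\mathrm{Occ}(U)$, and then argue that this landing spot is weakly increasing in $U$.

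The one substantive difference is that you take an unnecessary detour, caused by a misconception. You assert that $\mathrm{Occ}(S)\subseteq\mathrm{Occ}(T)$ is ``\emph{false} in general'' and therefore retreat to a prefix-count characterization of occupied spots. In fact the containment \emph{does} hold whenever $S\subseteq T$: by (the proof of) Theorem~\ref{theorem: invariant}, the occupied set depends only on the multiset of preferences, so compute $\mathrm{Occ}(T)$ by first parking the cars in $S$ (which yields exactly $\mathrm{Occ}(S)$) and then parking the cars in $T\setminus S$, which can only add further occupied spots. This is precisely what the paper exploits: once $\mathrm{Occ}(S)\subseteq\mathrm{Occ}(T)$, the maximal contiguous occupied block containing $a_i$ in the $T$-configuration contains the corresponding block in the $S$-configuration, so car $i$'s landing spot (hence its marginal displacement) weakly increases. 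Your prefix-count argument would also establish the needed monotonicity, but it is more work than required; the direct containment is the cleaner route and is what the paper uses.
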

\begin{proof}
Fix any $i \in [n]$ and $S \subseteq T \subseteq [n] \setminus \{i\}$.
Consider the arrival of car $i$ immediately after the arrival of the cars in $S$.
If spot $a_i$ is empty, then $c_\al(S \cup \{i\}) = c_\al(S)$ and the inequality is verified.
Conversely, suppose there is a sequence of contiguously occupied spots starting at some spot $ s$ satisfying $1 \leq s \leq a_i$ and ending at some spot $ t$ satisfying $a_i \leq t \leq n -1$, implying the displacement of car $i$ is $t - a_i + 1$.
Now, similarly consider the arrival of car $i$ immediately after the arrival of the cars in $T \supseteq S$.
Then, there is again a sequence of contiguously occupied spots, except this time starting at some spot $s'$ satisfying $1 \leq s' \leq s$ and ending at some spot $t'$ satisfying $t \leq t' \leq n -1$, implying the displacement of car $i$ is $t' - a_i + 1 \geq t - a_i + 1$.
\end{proof}

Supermodularity captures the notion of increasing marginal costs, and it arises whenever costs are exacerbated by ``congestion'' effects, such as in scheduling~\cite{goemans2002single,queyranne1993structure,schulz2010sharing,schulz2013approximating}.
Intuitively, parking games are supermodular because a car that arrives at an already busy street tends to displace significantly before it finds an unoccupied spot.
\cite[Theorem~1]{schulz2010sharing} showed that supermodular games are in some sense common: the problem of minimizing a non-negative linear function over a supermodular polyhedron, which arises often in combinatorial optimization, has a supermodular objective value.
In effect, parking games are a special family within the broader class of supermodular games.

It follows from Lemma~\ref{lemma: supermodularity} that the core of parking games is typically empty (with the exception of modular cases in which $\al \in \mathfrak{S}_n$).
For example, this can be verified from the fact that each car in isolation incurs no displacement.

\subsection{Expected Marginal Displacement}
\label{sec: expected marginal displacement}

Let $\al \in \PF_n$ be a parking function and $c_\al: 2^{[n]} \rightarrow \mathbb{N}$ be its parking game. 
In this section, we present a polynomial-time algorithm to compute the Shapley value of $c_\al$.

In this setting, based on Equation~\eqref{eq: shapley}, each car $i$ is assigned a cost-share of
\begin{equation}
\label{eq: expected marginal displacement}
    \phi_i(c_\al) 
    = \frac{1}{n!} \sum_{\pi \in \mathfrak{S}_n} c_\al \left(\{j \in [n]: \pi(j) \leq \pi(i)\}\right) - c_\al \left(\{j \in [n]: \pi(j) < \pi(i)\}\right).
\end{equation}
Note that given any arrival order $\pi \in \mathfrak{S}_n$, the difference 
\begin{equation*}
    c_\al \left(\{j \in [n]: \pi(j) \leq \pi(i)\}\right) - c_\al \left(\{j \in [n]: \pi(j) < \pi(i)\}\right)
\end{equation*}
corresponds to the displacement of car $i$ upon its arrival in the order given by $\pi$.
In effect, Equation~\eqref{eq: expected marginal displacement} is a formula for the \emph{expected marginal displacement} of car $i$ assuming its arrival order is determined uniformly at random.
We leverage this interpretation in what follows.

\begin{example}
\label{example: ex1}
Let $\alpha = (1, 4, 3, 3, 1, 2, 7) \in \PF_7$.
Based on \eqref{eq: disp} we have that $d(\alpha) = 7$.
Based on \eqref{eq: expected marginal displacement} we have that:
\begin{itemize}
    \item 
    $\phi_1(c_\alpha) = 7896 / 7! = 47/30$.
    \item 
    $\phi_2(c_\alpha) = 2856 / 7! = 17/30$.
    \item 
    $\phi_3(c_\alpha) = 5628 / 7! = 67/60$.
    \item 
    $\phi_4(c_\alpha) = 5628 / 7! = 67/60$.
    \item 
    $\phi_5(c_\alpha) = 7896 / 7! = 47/30$.
    \item 
    $\phi_6(c_\alpha) = 5376 / 7! = 16/15$.
    \item 
    $\phi_7(c_\alpha) = 0 / 7! = 0$.
\end{itemize}
Note that $\sum_{i = 1}^n \phi_i(c_\alpha) = 7$ (this reflects Axiom~\ref{axiom: efficiency}).
Moreover, note that cars with the same preference have the same cost-share allocation, such as cars $3$ and $4$ (this reflects Axiom~\ref{axiom: symmetry}).
Lastly, note that car $7$ is lucky (i.e., parks in its preferred spot) regardless of the arrival order.
Therefore, $\phi_7 = 0$ (this reflects Axiom~\ref{axiom: nullity}).
\end{example}

We first introduce some notation.
Given a tuple $\be$ and an integer $t \in [n]$, let
\begin{equation*}
    \Lambda_t(\be) = |\{b \in \be: b \geq t + 2\}|
\end{equation*}
be the number of entries of $\be$ that are greater than or equal to $t + 2$.
Similarly, given an integer $s \in [n]$, let
\begin{equation*}
    \Gamma_s(\be) = |\{b \in \be: b \leq s - 2\}|
\end{equation*}
be the number of entries of $z$ that are less than or equal to $s - 2$.
Moreover, given a weakly increasing tuple $\be'$, and three nonnegative integers $s, t, k \in \mathbb{N}$ with $s \leq t$, let $\mathcal{Q}(\be', s, t, k)$ denote the number of size $k$ sub-tuples of $\be'$ that, when treated as a preference $k$-tuple, cars park in spots $s, \ldots, t$.
Formally,
\begin{equation*}
    \mathcal{Q}(\be', s, t, k) = |\{(i_1, i_2, \ldots, i_k) : (b_{i_1}' - b_{i_1}' + 1, b_{i_2}' - b_{i_1}' + 1, \ldots, b_{i_k}' - b_{i_1}' + 1) \in \NDPF_{k, t - s + 1}\}|.
\end{equation*}

We obtain the following enumeration.
\begin{lemma}
\label{theorem: count}
Let $\al = (a_1, a_2, \ldots, a_n) \in \PF_n$ be a parking function and $\al' = (a_1', a_2', \ldots, a_n') \in \NDPF_n$ be its weakly increasing rearrangement.
Fix any car $j \in [n]$ and let $i \coloneqq r(j) \in [n]$ be its rank.
Then, Equation~\eqref{eq: expected marginal displacement} for car $j$ is given by
\begin{equation}
\label{eq: formula}
    \phi_j(c_{\al})
    =
    \phi_i(c_{\al'})
    =
    \frac{1}{n!}
    \sum_{s=1}^{a_i'}
    \sum_{t=a_i'}^{n - 1}
    (t - a_i' + 1)
    \mathcal{Q}(\al_{\hat{i}}', s, t, t-s+1)
    \mathcal{R}(\al_{\hat{i}}', s, t),
\end{equation}
where
\begin{equation*}
    \mathcal{R}(\al_{\hat{i}}', s, t)
    =
    \sum_{\lambda=0}^{\Lambda_t(\al_{\hat{i}}')}
    \sum_{\gamma=0}^{\Gamma_s(\al_{\hat{i}}')}
    \binom{\Lambda_t(\al_{\hat{i}}')}{\lambda}
    \mathcal{Q}(\al_{\hat{i}}', 1, s - 2, \gamma)
    (t - s + 1 + \lambda + \gamma)!
    (n - t + s - \lambda - \gamma - 2)!.
\end{equation*}
\end{lemma}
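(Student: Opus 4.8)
The plan is to reorganize the sum in Equation~\eqref{eq: expected marginal displacement} by conditioning on the parking outcome of car $i$ (the rank of car $j$) under a uniformly random arrival order $\pi \in \mathfrak{S}_n$. First I would invoke the remark following Definition~\ref{definition: parking game} together with Theorem~\ref{theorem: invariant} to reduce to the weakly increasing representative: $\phi_j(c_\al) = \phi_i(c_{\al'})$, so it suffices to work with $\al'$ and the distinguished index $i$. For a fixed $\pi$, the marginal term $c_{\al'}(\{j : \pi(j) \le \pi(i)\}) - c_{\al'}(\{j : \pi(j) < \pi(i)\})$ equals the displacement of car $i$ when it arrives after exactly the set $A = \{j \ne i : \pi(j) < \pi(i)\}$ of earlier cars. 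By Lemma~\ref{lemma: supermodularity}'s underlying observation, that displacement is determined entirely by the block of contiguously occupied spots containing $a_i'$ at the moment car $i$ arrives: if that block runs from spot $s$ to spot $t$ (with $1 \le s \le a_i' \le t \le n-1$, the case $s > a_i'$ or ``no block'' meaning zero displacement), then car $i$ contributes $t - a_i' + 1$. So I would rewrite
\[
\phi_i(c_{\al'}) = \frac{1}{n!} \sum_{s=1}^{a_i'} \sum_{t=a_i'}^{n-1} (t - a_i' + 1) \cdot N(s,t),
\]
where $N(s,t)$ counts pairs $(A, \text{orderings})$ — equivalently, permutations $\pi$ — under which the occupied block around $a_i'$ is exactly $[s,t]$ when car $i$ arrives.

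The heart of the argument is the count $N(s,t)$, and I expect this to be the main obstacle. The key structural fact is that the occupied set is $[s,t]$ if and only if the early cars $A$ split into three groups according to the permutation-invariance of parking (Theorem~\ref{theorem: invariant}): a group whose preferences land them exactly in spots $s, \dots, t$ (one spot short on one end, to be completed by car $i$), a group that parks entirely in $[1, s-2]$ (their preferences being $\le s-2$, leaving spot $s-1$ free so the block does not extend left), and a group that parks in $[t+2, n]$ (preferences $\ge t+2$, with one car preferring exactly $t+1$ absorbed — here I would have to be careful about which spots past $t$ can be used and track that spot $t+1$ is reachable only after car $i$ fills it, which is why the ``$\ge t+2$'' threshold appears in $\Lambda_t$). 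The subtle point is that whether a car with preference $\ge t+2$ is ``early'' or ``late'' is irrelevant to the block around $a_i'$ at car $i$'s arrival, since such a car cannot reach $[s,t]$; hence we sum over all ways $\lambda$ of choosing how many of the $\Lambda_t(\al_{\hat i}')$ such cars arrive before car $i$, with a $\binom{\Lambda_t(\al_{\hat i}')}{\lambda}$ factor. Similarly the cars with preference $\le s-2$ that park early must themselves form a valid parking configuration on $[1, s-2]$, contributing a $\mathcal{Q}(\al_{\hat i}', 1, s-2, \gamma)$ factor for the choice of $\gamma$ such cars, while the remaining cars with preference $\le s-2$ arrive late.

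Once the three-way split is identified, the counting is bookkeeping. The cars destined for $[s,t]$ (there must be exactly $t-s+1$ of them counting car $i$, hence $t-s$ from $\al_{\hat i}'$, but car $i$'s own preference $a_i'$ lies in $[s,t]$ so we need $t-s+1$ sub-entries of $\al_{\hat i}'$ forming the right shifted parking function — this is exactly $\mathcal{Q}(\al_{\hat i}', s, t, t-s+1)$) must all arrive before car $i$ and in a valid parking order; but by Theorem~\ref{theorem: invariant} every order of a fixed parking-function multiset is valid, so the number of valid relative orders is the factorial of the block size. Interleaving the early cars (the $t-s+1$ block cars — wait, car $i$ arrives last among them — plus $\lambda$ high-preference cars plus $\gamma$ low-preference cars, total $t - s + 1 + \lambda + \gamma$ cars before $i$, but $i$ itself is fixed in position so among these we get $(t-s+1+\lambda+\gamma)!$ orderings with the constraint that $i$ comes after the block cars automatically since the block cars are early by definition) and then ordering the $n - (t-s+1+\lambda+\gamma) - 1 = n - t + s - \lambda - \gamma - 2$ late cars freely gives the $(n - t + s - \lambda - \gamma - 2)!$ factor. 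Multiplying these and summing over $\lambda, \gamma$ yields exactly $\mathcal{Q}(\al_{\hat i}', s, t, t-s+1)\,\mathcal{R}(\al_{\hat i}', s, t)$, and substituting into the double sum over $s,t$ gives Equation~\eqref{eq: formula}. I would close by remarking that the $\mathcal{Q}$ counts and binomials are polynomial-time computable, which is what powers Theorem~\ref{theorem: main}, though the formal algorithmic claim presumably comes afterward.
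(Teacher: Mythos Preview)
Your proposal follows essentially the same strategy as the paper's proof: reduce to the weakly increasing representative, condition on the maximal occupied block $[s,t]$ containing $a_i'$ at the moment car $i$ arrives, split the early arrivals into three independent groups (those filling $[s,t]$, those with preference $\le s-2$ parking in $[1,s-2]$, and those with preference $\ge t+2$), and count orderings with the two factorials. The only wobble is your parenthetical on the block size: car $i$ does \emph{not} occupy a spot in $[s,t]$ but rather parks in spot $t+1$, so all $t-s+1$ cars in the block come from $\al_{\hat i}'$ directly---your self-correction lands on the right count $\mathcal{Q}(\al_{\hat i}',s,t,t-s+1)$, but the stated reason (``$a_i'\in[s,t]$'') is not what forces it.
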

\begin{proof}
Note that the cost-share $\phi_j(c_{\al})$ of car $j$ under $\al$ is equal to the cost-share $\phi_i(c_{\al'})$ of car $i$ under $\al'$.
Therefore, consider the displacement of car $i$ upon its arrival under $\al'$.

Upon its arrival, car $i$ drives to its preferred spot $a_i'$.
If spot $a_i'$ is empty, car $i$ parks there incurring zero displacement.
Conversely, suppose spot $a_i'$ is occupied.
Then, there is a sequence of contiguously occupied spots starting at some spot $s$ satisfying $1 \leq s \leq a_i'$ and ending at some spot $t$ satisfying $a_i' \leq t \leq n - 1$; note that $t \neq n$ since, by assumption, car $i$ is able to park.
Now, fix any pair of possible values for $s$ and $t$.
This implies the following (for otherwise the block does not start at $s$ and end at $t$):
\begin{itemize}
    \item Spot $t + 1$ is empty.
    As a consequence, car $i$ parks in spot $t + 1$ incurring $t - a_i' + 1$ displacement.
    \item If spot $s - 1$ exists, it is empty as well.
\end{itemize}

In turn, the choice of $s$ and $t$ leads to three contiguous segments of parking spots in which cars may park prior to the arrival of car $i$:
\begin{enumerate}
    \item Spots $1, \ldots, s - 2$ (if these exist),
    \item spots $s, \ldots, t$, and
    \item spots $t + 2, \ldots, n$ (if these exist).
\end{enumerate}
Note that unlike spots $s, \ldots, t$, spots $1, \ldots, s - 2$ need not be contiguously occupied.
Similarly, spots $t + 2, \ldots, n$ need not be contiguously occupied.
We now count the number of different subsets of cars that can occupy each of these three segments.
\begin{enumerate}
    \item 
    Since spot $s - 1$ is empty, only those cars with preferences less than or equal to $s - 2$ could possibly park in spots $1, \ldots, s - 2$; there are $\Gamma_s(\al_{\hat{i}})$ such cars.
    Only some number $0 \leq \gamma \leq \Gamma_s(\al_{\hat{i}})$ of cars park in these spots prior to the arrival of car $i$.
    Therefore, for any fixed possible value of $\gamma$, the number of size-$\gamma$ subset of cars that park in this segment is given by $Q(\al_{\hat{i}}', 1, s - 2, \gamma)$.
    \item 
    Since spot $s - 1$ is empty, spots $s, \ldots, t$ are contiguously occupied, and spot $t + 1$ is empty, we need the number of size-$(t - s + 1)$ subsets of cars that park in this segment.
    This is given by $Q(\al_{\hat{i}}', s, t, t - s + 1)$.
    \item 
    Since spot $t + 1$ is empty, only those cars with preferences greater than or equal to $t + 2$ could possibly park in spots $t + 2, \ldots, n$; there are $\Lambda_t(\al_{\hat{i}})$ such cars.
    Only some number $0 \leq \lambda \leq \Lambda_t(\al_{\hat{i}})$ of cars park in these spots prior to the arrival of car $i$.
    Therefore, for any fixed possible value of $\lambda$, the number of size-$\lambda$ subset of cars that park in this segment is given by $Q(\al_{\hat{i}}', t + 2, n, \lambda)$.
    In fact, since $\al' \in \NDPF_n$, any size-$\lambda$ subset of cars with preference greater than or equal to $t + 2$ is able to park in the segment $t + 2, \ldots, n$, and it follows that
    \begin{equation*}
        Q(\al_{\hat{i}}', t + 2, n, \lambda) = \binom{\Lambda_t(\al_{\hat{i}})}{\lambda}.
    \end{equation*}
\end{enumerate} 
Note that in (1) above, the term $Q(\al_{\hat{i}}', 1, s - 2, \gamma)$ does not generally simplify into a binomial because, depending on the preferences, certain cars might displace into spot $s - 1$, which we thus far assume to be empty.

Now, the choice of $s$, $t$, $\lambda$, and $\gamma$ imply $(t - s + 1) + \lambda + \gamma$ cars park prior to the arrival of car $i$, in one of $(t - s + 1 + \lambda + \gamma)!$ different arrival orders.
Similarly, $n - 1 - ((t - s + 1) + \lambda + \gamma) = n - t + s - \lambda - \gamma - 2$ cars arrive after the arrival of car $i$, in one of $(n - t + s - \lambda - \gamma - 2)!$ different arrival orders.

Summing over the possible values for $s$, $t$, $\lambda$, and $\gamma$ yields the sum portion of Equation \ref{eq: formula}:
\[
\sum_{s=1}^{a_i'}
    \sum_{t=a_i'}^{n - 1}
    (t - a_i' + 1)
    \mathcal{Q}(\al_{\hat{i}}', s, t, t-s+1)
    \mathcal{R}(\al_{\hat{i}}', s, t)
\]
where 
\[
\mathcal{R}(\al_{\hat{i}}', s, t)
    =
    \sum_{\lambda=0}^{\Lambda_t(\al_{\hat{i}}')}
    \sum_{\gamma=0}^{\Gamma_s(\al_{\hat{i}}')}
    \binom{\Lambda_t(\al_{\hat{i}}')}{\lambda}
    \mathcal{Q}(\al_{\hat{i}}', 1, s - 2, \gamma)
    (t - s + 1 + \lambda + \gamma)!
    (n - t + s - \lambda - \gamma - 2)!
\]
Finally, there are $n!$ different orders in which all cars could arrive, and the arrival order is realized uniformly at random. 
This completes the proof.
\end{proof}

\begin{example}
We demonstrate an execution of \eqref{eq: formula} using the instance in Example~\ref{example: ex1}.
Let 
\begin{equation*}
    \alpha = (1, 4, 3, 3, 1, 2, 7) \in \PF_7
\end{equation*}
and note that its weakly increasing rearrangement is 
\begin{equation*}
    \alpha' = (1, 1, 2, 3, 3, 4, 7) \in \NDPF_7.  
\end{equation*}
Consider $j = 2$, so that $\alpha_2 = 4$ and $r(j) = 6$.
To compute $\phi_2(\alpha)$ we equivalently compute $\phi_6(\alpha')$.
Here we only show one term of its computation.

Suppose $s = 2$ and $t = 4$ in \eqref{eq: formula}.
Then, upon its arrival, car $6$ is displaced $t - a_6' + 1 = 4 - 4 + 1 = 1$ unit.
Now, by our choice of $s$ and $t$, spots $2, 3, 4$ are contiguously occupied whereas spot $1$ and $5$ are unoccupied.
Note the following:
\begin{itemize}
    \item 
    Neither of cars $1$ or $2$ could have arrived, for otherwise spot $1$ would be occupied.
    In particular, $\Gamma_s(\alpha_{\hat{6}}') = \emptyset$ and we only need to consider $\gamma = 0$.
    \item 
    Car $7$ could have arrived, for in either case spot $5$ would remain unoccupied.
    In particular, $\Lambda_t(\alpha_{\hat{6}}') = \{7\}$ and we only need to consider $\lambda = 0, 1$.
    \item 
    Cars $3, 4, 5$ must have arrived, as this is the only way in which spots $2, 3, 4$ would be contiguously occupied.
    In particular, $\mathcal{Q}(\al_{\hat{6}}', s, t, t-s+1) = 1$.
\end{itemize}
In the case in which $\gamma = 0, \lambda = 0$, a total of $3$ cars arrive prior to car $6$ whereas a total of $3$ cars arrive after car $6$.
In the case in which $\gamma = 0, \lambda = 1$, a total of $4$ cars arrive prior to car $6$ whereas a total of $2$ cars arrive after car $6$.
To summarize, this choice of $s$ and $t$ contributes
\begin{equation*}
    \underbrace{1}_{t - a_6' + 1}
    \underbrace{1}_{\mathcal{Q}(\al_{\hat{6}}', s, t, t-s+1)} \left(\underbrace{1 \cdot 1 \cdot 3! \cdot 3!}_{\gamma = 0, \gamma = 0} \ + \ \underbrace{1 \cdot 1 \cdot 4! \cdot 2!}_{\lambda = 1, \gamma = 0} \right) = 84
\end{equation*}
to the total sum.
\end{example}

Assuming oracle access to $\mathcal{Q}$, Equation~\eqref{eq: formula} can be evaluated in polynomial time in $\al$ for any car $j \in [n]$ (note that each of the summations involves at most $n$ terms).
As a final step, we show that $\mathcal{Q}$ can be evaluated in polynomial time as well.

We first introduce a some additional notation.
Given a tuple $\be$, let $b^* = \min \{b \in \be\}$ be the value of its smallest entry and
\begin{equation*}
    U(\be) = (\max\{b, 1 + b^* \} : b \in \be)
\end{equation*}
be the copy of $\be$ in which all entries with value equal to $b^*$ are increased by one.
For example, if $\be = (3, 3, 4, 4, 5)$, then $b^* = 3$ and $U(\be)=(4, 4, 4, 4, 5)$.
We evaluate $\mathcal{Q}$ using the following recursive relation.

\begin{lemma}
\label{theorem: dynamic}
Let $\be' = (b_1', b_2', \ldots, b_{|\be'|}') \in [w]^{|\be'|}$ be a nonnegative, weakly increasing tuple where $w \in \mathbb{N}$, and let $s, t, k \in \mathbb{N}$ with $s \leq t$.
Then, $\mathcal{Q}(\be', s, t, k)$ satisfies the following recursive relation:
\begin{equation}
\label{eq: recursion}
    \mathcal{Q}(\be', s, t, k)
    =
    \begin{cases}
    1, & \text{if $k = 0$}, \\
    0, & \text{if $k > t - s + 1 \lor k > |\be'|$}, \\
    \mathcal{Q}(\be_{\hat{1}}', s, t, k), & \text{if $b_1' < s$}, \\
    \mathcal{Q}(\be', s + 1, t, k), & \text{if $b_1' > s$}, \\
    \mathcal{Q}(U(\be')_{\hat{1}}, s + 1, t, k - 1) + \mathcal{Q}(\be_{\hat{1}}', s, t, k),  & \text{if $b_1' = s$.}
    \end{cases}
\end{equation}
Using dynamic programming, $\mathcal{Q}(\be', s, t, k)$ can be evaluated in time polynomial in $|\be'|$ and $k$.
\end{lemma}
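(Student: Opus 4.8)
The plan is to verify the recursion~\eqref{eq: recursion} by a case analysis matching its branches, and then to bound the number of distinct subproblems it spawns so that memoized dynamic programming runs in polynomial time. Throughout, I read $\mathcal{Q}(\be', s, t, k)$ as the number of $k$-element position sets $\{i_1 < \cdots < i_k\}$ of $\be'$ such that the $k$ cars with preferences $b_{i_1}', \ldots, b_{i_k}'$, when parked on a sufficiently long one-way street, all occupy spots in $\{s, s+1, \ldots, t\}$; by Theorem~\ref{theorem: invariant} this is well defined independently of the arrival order.

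The starting point is the elementary fact that if a collection of cars with preference multiset $M$ all park, then the set $P$ of occupied spots satisfies $\min P = \min M$: no spot below $\min M$ is ever occupied, since occupying spot $q$ requires a preference $\le q$, while the first car with preference $\min M$ to arrive finds that spot empty and parks there. From this the first two branches are immediate: for $k = 0$ the only position set is empty (count $1$), while $k > t - s + 1$ (too few spots) or $k > |\be'|$ (too few cars) makes the count $0$. For the remaining branches, fix $1 \le k \le \min\{t - s + 1, |\be'|\}$ and split on the smallest entry $b_1'$. If $b_1' < s$, any position set containing position $1$ has $\min M \le b_1' < s$ and hence occupies a spot below $s$, so valid sets avoid position $1$ and $\mathcal{Q}(\be', s, t, k) = \mathcal{Q}(\be_{\hat{1}}', s, t, k)$. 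If $b_1' > s$, every position set has $\min M \ge s + 1$, so $\min P \ge s + 1$ and occupying a subset of $\{s, \ldots, t\}$ is equivalent to occupying a subset of $\{s + 1, \ldots, t\}$, giving $\mathcal{Q}(\be', s, t, k) = \mathcal{Q}(\be', s + 1, t, k)$.

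The substantive case is $b_1' = s$, which I expect to be the main obstacle. Partition valid position sets by whether they contain position $1$; those that do not contribute $\mathcal{Q}(\be_{\hat{1}}', s, t, k)$. For those that do, order the arrivals so that the position-$1$ car (preference $s$) comes first; it parks in spot $s$, and from then on any not-yet-arrived car with preference equal to $s$ behaves exactly as a car with preference $s + 1$ on the street with spot $s$ deleted, while cars with preference $> s$ are unaffected. Hence the occupied set of the whole collection is $\{s\}$ together with the occupied set of the remaining $k - 1$ cars, now read off the tuple $U(\be')_{\hat{1}}$ (which bumps precisely the remaining preferences equal to $s$) on spots $\{s + 1, s + 2, \ldots\}$; and the original set occupies a subset of $\{s, \ldots, t\}$ iff this reduced $(k-1)$-element position set of $U(\be')_{\hat{1}}$ occupies a subset of $\{s + 1, \ldots, t\}$. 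This correspondence is a bijection, so the contribution of the position sets containing position $1$ is $\mathcal{Q}(U(\be')_{\hat{1}}, s + 1, t, k - 1)$, and adding the two contributions yields the last branch. The delicate point is exactly the claim that replacing the remaining minimum-valued preferences by $s + 1$ faithfully reproduces the parking dynamics once spot $s$ is filled, and this is where permutation-invariance is used essentially.

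For the complexity claim, I would show that every tuple arising in the recursion has the form $(\max\{b_j', f\} : \ell < j \le |\be'|)$ for integers $\ell \ge 0$ and $f \ge 0$: deleting the first entry increments $\ell$, and $U(\cdot)_{\hat{1}}$ either raises the floor $f$ by one or resets it to one more than the current smallest entry, in either case also incrementing $\ell$. Moreover $t$ never changes, $s$ only increases, and the branches with $k > t - s + 1$ terminate at once, so it suffices to memoize over the parameters $(\ell, f, s, k)$: here $\ell \le |\be'|$, the fourth argument takes at most $k + 1$ values, $s$ takes $O(t)$ values, and $f$ stays bounded by a polynomial in $t$ and $k$ (a floor reset occurs only in the $b_1' = s$ case, where the current smallest entry equals $s \le t$, and otherwise $f$ grows by at most the $O(k)$ many $U$-steps on any branch). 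This leaves polynomially many subproblems, each evaluable in constant time from~\eqref{eq: recursion}, so a bottom-up table (equivalently, top-down memoization) computes $\mathcal{Q}(\be', s, t, k)$ in polynomial time, matching its use in Lemma~\ref{theorem: count}.
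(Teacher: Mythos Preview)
Your proof is correct and follows essentially the same approach as the paper: the case analysis on $b_1'$ relative to $s$ is identical, and your justification of the $b_1' = s$ branch via permutation invariance (letting the position-$1$ car park first) makes explicit what the paper leaves implicit. Your complexity argument via the $(\ell, f, s, k)$ parametrization of the recursively arising tuples is a more careful version of the paper's one-line assertion that the table is polynomially sized because only suffixes of $\be'$, bounded $s,t$, and at most $k$ applications of $U$ occur.
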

\begin{proof}
We first consider the base case.
If we attempt to park more cars than there are spots (i.e., $k > t - s + 1$), or more cars than there are preferences (i.e., $k > |\be'|$), the count is zero.
Similarly, if we attempt to park no cars (i.e., $k = 0$), the count is one.

Therefore, suppose $1 \leq k \leq \min\{t - s + 1, |\be'|\}$.
In this case, there are three possibilities, each leading to a distinct recursive call (recall $b_1'$ is the first entry of $\be'$):
\begin{itemize}
    \item 
    If $b_1' < s$, then the first car cannot park in the segment $s, \ldots, t$.
    Therefore, in this case, the count is the same as the count $\mathcal{Q}(\be_{\hat{1}}', s, t, k)$ upon the removal of the first car.
    \item 
    If $b_1' = s$, then there are two possibilities for the first car: it is selected as part of the size-$k$ subset of cars, or it is not.
    In the first sub-case, no subsequent car can park in the spot $s$ that starts the segment $s, \ldots, t$.
    Therefore, in this sub-case, the count is the same as the count $\mathcal{Q}(U(\be')_{\hat{1}}, s + 1, t, k - 1)$ upon increasing the preference of any car that prefers spot $s$ by one, removing the first car, increasing the segment start by one spot, and decreasing the number of cars to select by one.
    In the second sub-case, the count is the same as the count $\mathcal{Q}(\be_{\hat{1}}', s, t, k)$ upon the removal of the first car.
    \item 
    If $b_1' > s$, then the first car cannot park in the spot $s$ that starts the segment $s, \ldots, t$. 
    Therefore, in this case, the count is the same as the count $\mathcal{Q}(\be', s + 1, t, k)$ upon increasing the segment start by one spot.
\end{itemize}
Finally, note that a dynamic programming table of polynomial size can be implemented since its indices require at most $|\be'|$ suffixes of $\be'$, $s,t \leq |\be'|$, and $U$ need only be applied $k$-many times.
\end{proof}

While $w$ does not play any explicit role in the proof of Lemma~\ref{theorem: dynamic}, the values it can possibly take are implicitly restricted by the problem definition which, for any given $n \in \mathbb{N}$, is restricted to $\NDPF_n \subseteq [n]^n$.
We thus obtain our main result.

\begin{theorem}
\label{theorem: main}
There exists a polynomial-time algorithm to compute the Shapley value of parking games.
Namely, the one given by the formulas in Lemma~\ref{theorem: count} and Lemma~\ref{theorem: dynamic}.
\end{theorem}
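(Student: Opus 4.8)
The plan is to assemble the claimed algorithm directly from Lemma~\ref{theorem: count} and Lemma~\ref{theorem: dynamic} and then perform a complexity audit; essentially no new mathematics is required, only careful bookkeeping. First I would preprocess the input: given $\al \in \PF_n$, sort its entries to obtain the weakly increasing rearrangement $\al' \in \NDPF_n$ together with the rank function $r\colon [n] \to [n]$, which costs $O(n\log n)$ comparisons. Then, for each car $j \in [n]$, I would set $i \coloneqq r(j)$ and evaluate the right-hand side of \eqref{eq: formula} for $\phi_j(c_\al) = \phi_i(c_{\al'})$, invoking the recursion \eqref{eq: recursion} (with memoization) whenever a value of $\mathcal{Q}$ is requested. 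Correctness would be inherited wholesale: Lemma~\ref{theorem: count} already shows \eqref{eq: formula} equals the Shapley value, and Lemma~\ref{theorem: dynamic} already shows \eqref{eq: recursion} computes $\mathcal{Q}$.

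The substantive part of the write-up is the running-time bound. For a fixed car $j$, the expression \eqref{eq: formula} is a four-fold nested sum whose indices $s, t, \lambda, \gamma$ each range over at most $n$ values, so it has $O(n^4)$ summands; each summand is a product of the small integer $t - a_i' + 1$, one binomial coefficient, two factorials with arguments at most $n$, and two evaluations of $\mathcal{Q}$ on sub-tuples of $\al_{\hat{i}}'$. I would precompute the table $0!, 1!, \dots, n!$ once, so that every factorial and binomial below is one multiplication or division away, and I would maintain a single memoization table for $\mathcal{Q}$ shared across all the $O(n^4)$ calls. The analysis inside Lemma~\ref{theorem: dynamic} already certifies that the distinct arguments reachable through \eqref{eq: recursion} — suffixes of $\al_{\hat{i}}'$ after at most $n$ applications of $U$, with segment endpoints in $[n]$ and car-counts in $\{0, \dots, n\}$ — number only polynomially many, so this table is of polynomial size and each entry is filled with $O(1)$ arithmetic. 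Summing up, each $\phi_j(c_\al)$ is computed in time polynomial in $n$, and repeating over all $n$ cars multiplies the cost by a further factor of $n$.

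The one place I would be careful is bit-complexity: the summands of \eqref{eq: formula}, and hence the numerator of $\phi_j(c_\al)$, can be of order $n \cdot n!$ and therefore carry $\Theta(n \log n)$ bits, so I should explicitly note that arithmetic on such integers — and the final reduction of $\phi_j(c_\al) = (\text{integer})/n!$ to lowest terms — remains polynomial in the input size. This is the only step where ``polynomial time'' is not already visible from the two lemmas, and I do not expect a real obstacle there: the genuine content was carried out in proving Lemma~\ref{theorem: count} (collapsing the exponential sum over $\mathfrak{S}_n$ into a polynomial-size combinatorial sum) and Lemma~\ref{theorem: dynamic} (the dynamic program for $\mathcal{Q}$), and Theorem~\ref{theorem: main} is the routine synthesis that ties them together into a single algorithm outputting the exact vector $(\phi_1(c_\al), \dots, \phi_n(c_\al))$.
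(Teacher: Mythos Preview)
Your proposal is correct and follows essentially the same approach as the paper: invoke Lemma~\ref{theorem: count} and Lemma~\ref{theorem: dynamic} for correctness, then observe that the nested sums in \eqref{eq: formula} have polynomially many terms and that each call to $\mathcal{Q}$ passes a weakly increasing tuple of length at most $n$ with $k \leq n$, so the dynamic-programming table is polynomial. Your write-up is considerably more explicit than the paper's two-sentence proof (you spell out the $O(n^4)$ summand count, factorial precomputation, memoization sharing, and bit-complexity), but this is just a fuller accounting of the same argument rather than a different route.
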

\begin{proof}
The correctness of the algorithm follows from the proofs of the lemmas.
Its running time follows from Lemma~\ref{theorem: dynamic} and the fact that, in Equation~\eqref{eq: formula}, the recursive relation Equation~\eqref{eq: recursion} is always evaluated passing weakly increasing $\be'$ and $k$ satisfying $|\be'|, k \leq n$.
\end{proof}

\section{Conclusion}
\label{sec: conclusion}

One can envision real-world systems in which displacement is a costly negative externality and, as a result, the parking rate at a given spot is a function of the displacement derived from its popularity.
For example, in transportation settings, this might be because of increased environmental emissions.
The results presented in this work form a methodological basis for operating such a system.

The results in this paper readily extend to $(n,m)$-parking functions by splitting them into small independent ``parking functions,'' in the style underlying 
Theorem~\ref{theorem: invariant}.
Moreover, because of Axiom~\ref{axiom: additivity}, Theorem~\ref{theorem: main} can be applied given any characteristic function that is an affine function of the total displacement.

We conclude with directions for future research.
As noted in Lemma~\ref{lemma: supermodularity}, parking games are supermodular, which indicates that their core is empty (except for modular cases in which $\al \in \mathfrak{S}_n$).
Therefore, the \emph{least core}~\cite{shapley1966quasi,maschler1979geometric} is an appropriate alternative solution concept.
The least core is the set of optimal solutions to 
\begin{equation}
\label{eq: z^*}
    z^* = \min \left\{z : \sum_{i=1}^n \phi_i = c([n]), \ \sum_{i \in S} \phi_i \leq c(S) + z, \text{ for all } S \subseteq [n] \right\},
\end{equation}
and $z^*$ is the \emph{least core value}.
Unlike the core, the least core is always non-empty.
A least core allocation minimizes the worst-case dissatisfaction over all coalitions, and the least core value can be seen as the minimum amount that needs to be charged for defection (e.g., by a governing authority) in order to incentivize cooperation.

\cite{schulz2010sharing, schulz2013approximating} study the least core and least core value of supermodular cost-sharing games.
They show that computing the least core value of an arbitrary supermodular game is strongly NP-hard~\cite[Theorem~2]{schulz2010sharing}.
Interestingly, for \emph{scheduling games}, not only can their Shapley value be computed in polynomial time, but it moreover happens to be a least core allocation~\cite[Theorem~3]{schulz2010sharing}. 
(However, they note that computing the least core value of scheduling games remains weakly NP-hard).
Parking games are similar to scheduling games in that, as we have shown, they are supermodular and their Shapley value can be computed in polynomial time.
However, they are different in that their Shapley value is not a least core allocation.
\begin{lemma}
\label{lemma: not lca}
The Shapley value of parking games is not a least core allocation.
\end{lemma}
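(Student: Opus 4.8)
The plan is to exhibit a single small parking function whose Shapley value violates one of the least-core inequalities from Equation~\eqref{eq: z^*}, and to do so I first need to pin down the least core value $z^*$ for that instance, or at least a valid lower bound on it, and then separately compute the Shapley value and show some coalition's constraint fails. The smallest non-modular case is $\al = (1,1) \in \PF_2$, where $c_\al(\emptyset) = 0$, $c_\al(\{1\}) = c_\al(\{2\}) = 0$, and $c_\al(\{1,2\}) = 1$. By symmetry and efficiency (Axioms~\ref{axiom: symmetry} and~\ref{axiom: efficiency}) the Shapley value is $\phi = (1/2, 1/2)$. For any allocation $\phi$ feasible in Equation~\eqref{eq: z^*} we need $\phi_1 \le z$, $\phi_2 \le z$, $\phi_1 + \phi_2 = 1$, and $\phi_1 + \phi_2 \le 1 + z$; the binding constraints are the singletons, forcing $z \ge \max\{\phi_1, \phi_2\} \ge 1/2$, with equality at $\phi_1 = \phi_2 = 1/2$. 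So for $(1,1)$ the Shapley value happens to be the (unique) least-core allocation, and this instance does \emph{not} work — I would need to go one step larger.

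Thus the real plan is to use $\al = (1,1,1) \in \PF_3$ (or the analogous all-ones function of some small length). Here $c_\al(S) = \binom{|S|}{2}$, i.e. $c_\al$ depends only on $|S|$: singletons cost $0$, pairs cost $1$, the grand coalition costs $3$. By symmetry and efficiency the Shapley value is $\phi_i = 1$ for each $i$. To show this is not a least-core allocation I would compute $z^*$ from Equation~\eqref{eq: z^*}: the constraints are $\sum \phi_i = 3$, $\phi_i \le z$ for singletons, and $\phi_i + \phi_j \le 1 + z$ for pairs. Minimizing $z$ subject to these, the symmetric point $\phi = (1,1,1)$ gives singleton slack $z \ge 1$ and pair constraint $2 \le 1 + z$, i.e. $z \ge 1$; but one can do better by \emph{unbalancing} the allocation — push weight onto the players whose pair-constraints are slack. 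Solving the small LP, one finds $z^* < 1$ (the optimum is attained at a non-symmetric vertex, e.g. something like $\phi = (3/2, 3/2, 0)$ giving pair constraints $3 \le 1 + z$... that is worse; the correct optimal vertex must be computed carefully), whereas the symmetric Shapley allocation $\phi = (1,1,1)$ forces $z = 1$. Since $1 > z^*$, the Shapley value lies outside the least core.

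Carrying this out, the main obstacle is purely the small linear-programming computation of $z^*$: I must identify the optimal vertex of the polytope in Equation~\eqref{eq: z^*} and verify both that $z^*$ is strictly below the value $\max_S\left(\sum_{i\in S}\phi_i^{\mathrm{Shapley}} - c_\al(S)\right)$ and that the Shapley allocation is genuinely infeasible at that smaller $z^*$ — i.e. that \emph{some} coalition's excess under the Shapley value strictly exceeds $z^*$. Concretely, the "worst" excess of the Shapley value $\phi = (1,\dots,1)$ on $\al = (1^n)$ is achieved on a pair $S$, where $\sum_{i\in S}\phi_i - c_\al(S) = 2 - 1 = 1$; so it suffices to show $z^* < 1$ for this game, which is a finite, instance-specific check. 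An alternative, cleaner route that avoids computing $z^*$ exactly: show directly that there exists a feasible pair $(\phi, z)$ for Equation~\eqref{eq: z^*} with $z < 1$ (so $z^* < 1$), and separately that every least-core allocation $\psi$ must satisfy $\max\{\psi_i\} \le z^* < 1$ together with $\sum \psi_i = n$, which is impossible to reconcile with $\psi = (1,\dots,1)$ once one exhibits the asymmetric improvement. I would present the length-$3$ (or length-$4$) all-ones instance, give the explicit improved allocation witnessing $z^* < 1$, and conclude that $\phi = (1,\dots,1)$, being the Shapley value, cannot be a least-core allocation — keeping every arithmetic step fully explicit since the whole argument rests on one concrete small example.
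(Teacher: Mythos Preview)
Your proposed counterexample $\al = (1,1,\ldots,1)$ cannot work, for a structural reason you nearly stumbled onto when your unbalanced trial $(3/2,3/2,0)$ came out worse. The all-ones parking game is \emph{fully symmetric}: $c_\al(S)=\binom{|S|}{2}$ depends only on $|S|$. For any symmetric game, if $(\phi,z)$ is feasible in~\eqref{eq: z^*} then so is $(\sigma\cdot\phi,z)$ for every permutation $\sigma$; averaging over all $\sigma$ shows the equal-split allocation $\bar\phi_i=c([n])/n$ is feasible at the same $z$. Hence the equal-split allocation is always a least-core allocation of a symmetric game, and by Axioms~\ref{axiom: efficiency} and~\ref{axiom: symmetry} the equal-split allocation \emph{is} the Shapley value. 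Concretely, for $\al=(1,1,1)$ the three singleton constraints $\phi_i\le z$ sum to $3\le 3z$, so $z^*\ge 1$, and $\phi=(1,1,1)$ attains $z=1$; your claimed $z^*<1$ is false.

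So the missing idea is that a counterexample must be genuinely asymmetric in the preferences, not merely in a trial allocation. The paper uses $\al=(1,1,2)$: here the characteristic function distinguishes car $3$ (e.g.\ $c_\al(\{1,3\})=0$ but $c_\al(\{1,2\})=1$), the Shapley value is $(5/6,5/6,2/6)$, the least core value is $z^*=1$ attained at $(1,1,0)$, and the coalition $S=\{1,3\}$ witnesses $5/6+2/6=7/6>0+1=c_\al(S)+z^*$. Your overall strategy---exhibit a small instance, compute both quantities, show a violated inequality---is exactly right; you just need a parking function whose game is not permutation-invariant.
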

\begin{proof}
It suffices to show a counterexample.
Let $\al = (1, 1, 2) \in \PF_3$ and consider its parking game $c_\al$.
We have $c_\al(\emptyset) = c_\al(\{1\}) = c_\al(\{2\}) = c_\al(\{3\}) = c_\al(\{1, 3\}) = c_\al(\{2, 3\})= 0$, $c_\al(\{1, 2\}) = 1$, and $c_\al(\{1, 2, 3\}) = 2$.
Its Shapley value is $\phi_1(c_\al) = \phi_2(c_\al) = 5/6$ and $\phi_3(c_\al) = 2/6$.
However, its least core value is $z^* = 1$ with the least core allocation $\phi_1 = \phi_2 = 1$ and $\phi_3 = 0$.
In particular, for $S = \{1, 3\}$ we have
\begin{equation*}
    \phi_1(c_\al) + \phi_3(c_\al) = 5/6 + 2/6 = 7/6 \nleq 1 = 0 + 1 = c_\al(\{1, 3\}) + z^*.
\end{equation*}
\end{proof}

However, given the positive result on the complexity of computing the Shapley value of parking games, we ask whether a least core allocation or the least core value of parking games can be computed in polynomial time and/or interpreted combinatorially.
First, we ask whether the least core allocation(s) relate to individual parking statistics, including but not limited to those concerning individual displacement.
We also ask about the coalitions $S \subseteq [n]$ for which the least core value $z^*$ is attained in \eqref{eq: z^*} for a particular least core allocation: do such coalitions fully determine $z^*$ in terms of parking statistics that distinguish them from other coalitions?
Similarly, we ask whether the Shapley value of parking games is an \emph{approximate} least core allocation (refer to \cite[Section 2]{schulz2013approximating}).

Finally, given the abundance of connections between parking functions and other combinatorial objects (refer to Section~\ref{sec: introduction} for some examples), future work might consider mappings that preserve the total displacement statistic, in this way defining equivalent cooperative games except with a different combinatorial interpretation.

\acknowledgements
\label{sec:ack}

J.~C. Mart\'inez Mori would like to thank A. Toriello for a helpful discussion.

\nocite{*}
\bibliographystyle{abbrvnat}
\bibliography{bib}
\label{sec:biblio}

\end{document}